\documentclass[11pt]{amsart}
\usepackage{xcolor, hyperref}
\usepackage{esint} 
\usepackage{amsmath}
\usepackage{verbatim} 

\newtheorem{theorem}{Theorem}[section]
\newtheorem{lemma}[theorem]{Lemma}
\newtheorem{proposition}[theorem]{Proposition}

\DeclareMathOperator{\Ric}{Ric}

\DeclareMathOperator{\tr}{tr}
\DeclareMathOperator{\Real}{Re}
\DeclareMathOperator{\divergente}{div}

\theoremstyle{remark} 
\newtheorem{remark}{Remark}

\numberwithin{equation}{section}

\title[Area-charge inequalities for free boundary MOTS ]{Area-charge inequalities and local rigidity of free boundary MOTS in charged initial data sets}

\author[Ferreira]{Edilson Ferreira}
\author[Nunes]{Ivaldo Nunes}

\address{
Departamento de Matemática, Universidade Federal do Maranhão,
 São Luís - Brazil}
\email{ivaldo.nunes@ufma.br}
\address{Faculdade de Matemática, Universidade Federal do Pará - UFPA, Belém - Brazil}
\email{edilsonfilho@ufpa.br}

\date{\today}

\usepackage{marginnote}

\begin{document}

\begin{abstract}
In this work, we prove area-charge inequalities for free boundary MOTS in initial data sets for the Einstein-Maxwell equations with vanishing magnetic fields.  In addition, we prove a local rigidity result under the assumption that equality holds. 
\end{abstract}

\maketitle
\section{Introduction}

In the late 1970s, Schoen and Yau began investigating Riemannian manifolds with positive scalar curvature by using minimal surfaces techniques (see, for example, \cite{SY2,SY3,SY1}). A key observation made by Schoen and Yau relating these two concepts was the role played by scalar curvature in the stability inequality for minimal hypersurfaces after a suitable rearrangement using the Gauss equation. In particular, as a consequence of this idea, we have that any orientable compact stable minimal surface $\Sigma$ in an orientable Riemannian three-manifold $(M^3,g)$ must satisfy the following inequality
\begin{equation}\label{ineqsy}
\frac{1}{2}\int_\Sigma\left(R_M+|A|^2\right)\,da \leq \int_\Sigma K_\Sigma\,da=2\pi \chi(\Sigma),
\end{equation}
where $R_M$, $A$ and $K_\Sigma$ denote the scalar curvature of $(M^3,g)$, the second fundamental form and the Gauss curvature of $\Sigma$, respectively. Inequality \eqref{ineqsy} imediately implies the following:

\begin{proposition}[Schoen and Yau, \cite{SY3}]\label{propsy1}
If $(M^3,g)$ is an orientable Riemannian three-manifold with positive scalar curvature $R_M>0$, then any orientable compact stable minimal surface $\Sigma\subset M$ is homeomorphic to a two-sphere $\mathbb{S}^2$.
\end{proposition}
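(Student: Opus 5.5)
The plan is to derive the result directly from inequality \eqref{ineqsy}, so the main work is to justify that inequality from the stability of $\Sigma$. Since $\Sigma$ and $M$ are both orientable, $\Sigma$ is two-sided and has a global unit normal $\nu$. Stability then means that the second variation of area is nonnegative for normal variations $\varphi\nu$:
\[
\int_\Sigma |\nabla\varphi|^2\,da \geq \int_\Sigma \left(\Ric(\nu,\nu)+|A|^2\right)\varphi^2\,da \quad\text{for all } \varphi\in C^\infty(\Sigma).
\]
Because $\Sigma$ is compact without boundary, the test function $\varphi\equiv 1$ is admissible, and it gives
\[
\int_\Sigma \left(\Ric(\nu,\nu)+|A|^2\right)da\leq 0.
\]

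Next I rewrite $\Ric(\nu,\nu)$ using the twice-traced Gauss equation,
\[
R_M = R_\Sigma + 2\Ric(\nu,\nu) + |A|^2 - H^2 .
\]
Here $R_\Sigma=2K_\Sigma$ and $H=0$ because $\Sigma$ is minimal. This yields
\[
\Ric(\nu,\nu)+|A|^2=\tfrac12\left(R_M+|A|^2\right)-K_\Sigma .
\]
Substituting into the inequality from the first step gives the left inequality in \eqref{ineqsy}. The equality $\int_\Sigma K_\Sigma\,da=2\pi\chi(\Sigma)$ is the Gauss--Bonnet theorem. If $\Sigma$ has several components, I apply the argument to each component separately; this is fine because stability is inherited by each component.

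With \eqref{ineqsy} in hand, the hypothesis $R_M>0$ makes the left-hand side strictly positive, since $|A|^2\geq 0$. Therefore $\chi(\Sigma)>0$. A compact connected orientable surface has $\chi=2-2g$, so the genus is $g=0$. By the classification of surfaces, $\Sigma$ is homeomorphic to $\mathbb{S}^2$.

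I do not expect a real obstacle. The only points that need care are the bookkeeping in the traced Gauss equation (sign conventions and the factor $2$), and noting that orientability of both $\Sigma$ and $M$ is exactly what makes $\varphi\equiv1$ a legitimate normal variation. I would also state the standing assumptions explicitly: $\Sigma$ is closed, meaning compact without boundary, and connected.
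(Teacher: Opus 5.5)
Your proof is correct and follows the paper's route. The paper obtains \eqref{ineqsy} by rearranging the stability inequality with the Gauss equation and applying Gauss--Bonnet, then observes that $R_M>0$ forces $\chi(\Sigma)>0$, hence $\Sigma\cong\mathbb{S}^2$; your explicit choice of test function $\varphi\equiv 1$, the identity $\Ric(\nu,\nu)+|A|^2=\tfrac12(R_M+|A|^2)-K_\Sigma$, and your remarks on two-sidedness, closedness and connectedness supply exactly the details the paper leaves implicit.
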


In combination with an existence result for area-minimizing surfaces,  Schoen and Yau were able to prove the interesting fact that the three-torus $\mathbb{T}^3$ does not admit a Riemannian metric of positive scalar curvature.  
In \cite{SY2}, they extended this for any $n$-torus $\mathbb{T}^n$ with dimension $3\leq n\leq 7$. The general case was settled by Gromov and Lawson \cite{GL1,GL2} using spin techniques. Another example of an important result that Schoen and Yau proved by using this relationship between scalar curvature and minimal surfaces is the positive mass theorem (see \cite{SY1}). It is worth mentioning that the positive mass theorem was proved by Witten \cite{Witt} in the spin case without restriction on the dimension. For recent developments in any dimension  using minimal hypersurface techniques, we refer the reader to \cite{SY4}.

More generally, as a refinement of Proposition \ref{propsy1}, it was proved by Fischer-Colbrie and Schoen \cite{FCS} that if $R_M\geq 0$, then inequality \eqref{ineqsy} implies that $\Sigma$ has to be topologically equivalent to a two-sphere or a two-torus, with this last case occurring only if $\Sigma$ is a totally geodesic flat two-torus and $R_M=0$ on $\Sigma$. This infinitesimal rigidity obtained by them naturally inspired the following conjecture (see, for example, \cite{FCS}, Remark 4, and \cite{CG1}): \textit{Let $(M^3,g)$ be a Riemannian manifold with $R_M\geq 0$ and let $\Sigma\subset M$ be a two-sided two-torus.   If $\Sigma$ is locally area-minimizing, then $M$ is flat in a neighborhood of $\Sigma$.}

The above conjecture was solved by Cai and Galloway in \cite{CG1}. In particular, as a consequence, we have that $(M,g)$ locally splits as a standard product $((-\varepsilon,\varepsilon)\times \Sigma,dt^2+g_\Sigma)$ in a neighborhood of $\Sigma$, where $g_\Sigma$ is the induced metric on $\Sigma$, which is flat. An analogous result in higher dimensions was proved by Cai \cite{Cai1}. For a simplified proof, see \cite{Gal1}.

Inspired by the results above, Bray, Brendle and Neves \cite{BBN1} proved that if $(M^3,g)$ has scalar curvature bounded from below by a positive constant $2\Lambda>0$ and $\Sigma\subset M$ is a stable minimal two-sphere, then its area satisfies $|\Sigma|\leq 4\pi/\Lambda$. Furthermore, if $\Sigma$ is locally area-minimizing and $|\Sigma|=4\pi/\Lambda$, then $M$ splits isometrically as a product $((-\varepsilon,\varepsilon)\times \Sigma, dt^2+g_\Sigma)$ in a neighborhood of $\Sigma$, where $g_\Sigma$ denotes the induced metric on $\Sigma$, which in this case has constant Gaussian curvature equal to $\Lambda$. The analogous result in the case where the scalar curvature of $(M^3,g)$ is bounded from below by a negative constant $2\Lambda<0$ and $\Sigma$ is an orientable compact locally area-minimizing surface of genus $g(\Sigma)\geq 2$ was proved by the second author \cite{Nun1}. We collect the main results proved by Cai and Galloway \cite{CG1}, Bray, Brendle and Neves \cite{BBN1} and the second author \cite{Nun1} in the following:

\begin{theorem}\label{splits1}
Let $(M^3,g)$ be an orientable Riemannian three-manifold with scalar curvature bounded satisfying $R_M\geq 2\Lambda$, where $\Lambda\in\mathbb{R}$. Let $\Sigma\subset M$ be an orientable compact minimal surface. Suppose that $\Sigma$ is locally area-minimizing. Then:
\begin{itemize}
\item[(1)] If $\Lambda=0$ and $\Sigma$ is a two-torus, then $\Sigma$ is flat with the induced metric $g_\Sigma$ and $(M^3,g)$ splits isometrically as a standard product $((-\varepsilon,\varepsilon)\times \Sigma, dt^2+g_\Sigma)$ in a neighborhood of $\Sigma$ (see \cite{CG1}).
\item[(2)] If $\Lambda>0$, then $\Sigma$ is a two-sphere and the area of $\Sigma$ satisfies $|\Sigma|\leq 4\pi/\Lambda$. Moreover, if equality holds then $\Sigma$ has constant Gaussian curvature equal to $\Lambda$ with the induced metric $g_\Sigma$ and  $(M^3,g)$ splits isometrically as a standard product $((-\varepsilon,\varepsilon)\times \Sigma, dt^2+g_\Sigma)$ in a neighborhood of $\Sigma$ (see \cite{BBN1}).
\item [(3)] If $\Lambda<0$ and $\Sigma$ has genus greater than $1$, then the area of $\Sigma$ satisfies $|\Sigma|\geq 2\pi\chi(\Sigma)/\Lambda$. Moreover, if equality holds then $\Sigma$ has constant Gaussian curvature equal to $\Lambda$ with the induced metric $g_\Sigma$ and  $(M^3,g)$ splits isometrically as a standard product $((-\varepsilon,\varepsilon)\times \Sigma, dt^2+g_\Sigma)$ in a neighborhood of $\Sigma$ (see \cite{Nun1}).
\end{itemize}
\end{theorem}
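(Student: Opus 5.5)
The plan is to follow the unified scheme of \cite{BBN1,Nun1,CG1}. There are two stages: an infinitesimal rigidity statement obtained from stability, and a local foliation argument that propagates it to a neighborhood of $\Sigma$.

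\emph{Step 1 (infinitesimal rigidity).} Since $\Sigma$ is locally area-minimizing, it is stable. Take the test function $\varphi\equiv 1$ in the stability inequality and use the Gauss equation rearrangement behind \eqref{ineqsy}:
\[
\frac12\int_\Sigma (2\Lambda+|A|^2)\,da\le \frac12\int_\Sigma (R_M+|A|^2)\,da\le 2\pi\chi(\Sigma).
\]
This gives three conclusions. If $\Lambda>0$, then $\chi(\Sigma)>0$, so $\Sigma$ is a sphere and $\Lambda|\Sigma|\le 4\pi$. If $\Lambda<0$, then $\Lambda|\Sigma|\le 2\pi\chi(\Sigma)$, i.e. $|\Sigma|\ge 2\pi\chi(\Sigma)/\Lambda$. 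If $\Lambda=0$ and $\Sigma$ is a torus, both sides vanish.

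In every equality case all inequalities are equalities. Hence $A\equiv0$, $R_M=2\Lambda$ on $\Sigma$, and $\varphi=1$ is in the kernel of the Jacobi operator $L=\Delta_\Sigma+\Ric(\nu,\nu)+|A|^2$. So $\Ric(\nu,\nu)=0$, and $\lambda_1(L)=0$ with constant first eigenfunction. Since $\Ric(\nu,\nu)=\frac12 R_M-K_\Sigma-\frac12|A|^2$, we get $K_\Sigma=\Lambda$.

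\emph{Step 2 (CMC foliation).} Because $\lambda_1(L)=0$ has constant eigenfunction, I use the implicit function theorem on the map $(t,u)\mapsto H(\exp(t+u)\nu)-\frac{1}{|\Sigma|}\int H$, restricted to functions $u$ with zero mean. Its linearization in $u$ is $-L=-\Delta_\Sigma$, which is invertible on mean-zero functions. This produces a foliation $\{\Sigma_t\}_{|t|<\varepsilon}$ of a neighborhood of $\Sigma$ by constant mean curvature surfaces $\Sigma_t$, graphs over $\Sigma$ with lapse $\phi_t>0$ and $\Sigma_0=\Sigma$.

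\emph{Step 3 (monotonicity of $H$; main obstacle).} The evolution equation is $H'(t)=-\Delta\phi_t-(\Ric(\nu_t,\nu_t)+|A_t|^2)\phi_t$. Divide by $\phi_t$, integrate over $\Sigma_t$, and apply Gauss--Bonnet together with $\int\phi_t^{-2}|\nabla\phi_t|^2\ge0$:
\[
H'(t)\int_{\Sigma_t}\phi_t^{-1}\le 2\pi\chi(\Sigma)-\Lambda|\Sigma_t|-\tfrac12\int_{\Sigma_t}(|A_t|^2+H^2)\le 2\pi\chi(\Sigma)-\Lambda|\Sigma_t|,
\]
using $R_M\ge2\Lambda$.

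The hard point is that the right side is not obviously $\le0$ when $|\Sigma_t|\ne|\Sigma|$. In the case $\Lambda=0$ it is $0$ directly, since $\chi(\Sigma)=0$. For $\Lambda\ne0$, first variation gives $\frac{d}{dt}|\Sigma_t|=-H(t)\int_{\Sigma_t}\phi_t$, with sign convention $H>0$ meaning area decreases outward. Writing $2\pi\chi(\Sigma)=\Lambda|\Sigma|$, we have $2\pi\chi(\Sigma)-\Lambda|\Sigma_t|=\Lambda(|\Sigma|-|\Sigma_t|)$, and so
\[
H'(t)\le C\,\Lambda\int_0^t H(s)\Big(\int_{\Sigma_s}\phi_s\Big)\,ds
\]
up to the right sign. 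A Gronwall/ODE comparison, as in \cite{BBN1} and \cite{Nun1}, then shows that $H(t)\le0$ for $t\ge0$ and $H(t)\ge0$ for $t\le0$. The sign of $\Lambda$ changes which way the inequality points, but it works out in both cases because $\chi$ has the same sign as $\Lambda$. By first variation this gives $|\Sigma_t|\le|\Sigma|$.

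\emph{Step 4 (rigidity).} Local area minimization gives $|\Sigma_t|\ge|\Sigma|$, so $|\Sigma_t|=|\Sigma|$ and $H\equiv0$. Then every inequality in Step 3 is an equality. Thus each $\Sigma_t$ is totally geodesic, $\phi_t$ is constant on $\Sigma_t$, $R_M=2\Lambda$, and $\Ric(\nu_t,\nu_t)=0$; in particular each $\Sigma_t$ has $K=\Lambda$. Since $\phi_t$ depends only on $t$, $\nu_t$ is a parallel field: totally geodesic leaves together with a constant lapse make $\nabla\nu=0$. Reparametrizing $t$ by arclength, the metric becomes $dt^2+g_{\Sigma_t}$. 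Totally geodesic leaves mean $g_{\Sigma_t}$ is independent of $t$, so the metric is $dt^2+g_\Sigma$ with $g_\Sigma$ of constant curvature $\Lambda$ (flat when $\Lambda=0$). This yields (1)--(3).
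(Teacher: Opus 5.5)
The paper gives no proof of this theorem; it is quoted from \cite{CG1}, \cite{BBN1} and \cite{Nun1}. Your scheme is the standard unified argument and the same template the paper uses in Section~\ref{mainproofsec}: infinitesimal rigidity from the test function $1$, a CMC foliation from the implicit function theorem, the integrated evolution inequality, and the equality analysis. Steps 1, 2 and 4 are fine.

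The gap is in Step 3, the step you yourself call the main obstacle. It is asserted rather than proved, and as written it is internally inconsistent. Your formula $H'=-\Delta\phi_t-(\Ric(\nu_t,\nu_t)+|A_t|^2)\phi_t$ is the one for $H=\divergente_{\Sigma_t}\nu_t$. With that convention the first variation is $\frac{d}{dt}|\Sigma_t|=+H(t)\int_{\Sigma_t}\phi_t$, not $-H(t)\int_{\Sigma_t}\phi_t$. With your minus sign, ``$H\le 0$ for $t\ge 0$'' would give $|\Sigma_t|\ge|\Sigma|$, the opposite of what Step 4 needs. With consistent signs you get
\[
H'(t)\int_{\Sigma_t}\phi_t^{-1}\,da_t\le \Lambda\bigl(|\Sigma|-|\Sigma_t|\bigr)=-\Lambda\int_0^t H(s)\Bigl(\int_{\Sigma_s}\phi_s\,da_s\Bigr)\,ds .
\]
For $\Lambda<0$ the kernel $|\Lambda|\int_{\Sigma_s}\phi_s$ is nonnegative. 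Lemma~\ref{Lemma f nonnegative} with $f=H$, $\rho=0$ then gives $H\le 0$ on $[0,\varepsilon)$. Applied to $-H(-t)$, it gives $H\ge 0$ for $t\le 0$. This is the argument of \cite{Nun1}.

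For $\Lambda>0$, however, the kernel is $-\Lambda\int_{\Sigma_s}\phi_s\le 0$, so Lemma~\ref{Lemma f nonnegative} does not apply. The remark that ``$\chi$ has the same sign as $\Lambda$'' does not supply the missing sign: after substituting $2\pi\chi(\Sigma)=\Lambda|\Sigma|$, $\chi$ no longer appears.

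What you missed is that for $\Lambda>0$ there is no obstacle at all. Local area-minimality, which you only invoke in Step 4, already gives $|\Sigma_t|\ge|\Sigma|$. Hence $\Lambda(|\Sigma|-|\Sigma_t|)\le 0$, and $H'\le 0$ on $(-\varepsilon,\varepsilon)$ directly; the same holds trivially for $\Lambda=0$. This is the argument of \cite{BBN1}. It is also how the paper disposes of the $\Lambda$-term for $\Lambda>0$ in Section~\ref{mainproofsec}, via the outer area minimizing hypothesis, keeping the Gronwall-type lemma only for terms with a positive coefficient, as in the case $\Lambda<0$.

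With these two corrections you obtain $H\le 0$ for $t\ge 0$ and $H\ge 0$ for $t\le 0$. Hence $|\Sigma_t|\le|\Sigma|$, and your Step 4 goes through unchanged.
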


It is worth noting that Micallef and Moraru gave a unified proof of the local splitting results obtained in \cite{CG1}, \cite{BBN1} and \cite{Nun1}, and that Moraru \cite{Mor1} was able to generalize the result in \cite{Nun1} to higher dimensions. Many splitting results, in the same spirit as those previously cited, have been proved in recent years. We refer the reader, for example, to \cite{MaxNun}, \cite{CasRos}, \cite{Ambrozio}, \cite{BCBS1}, \cite{BaBaCruz}, \cite{MazRos}, \cite{EspRos1}, \cite{ChEMo}, \cite{Mendes2019}, \cite{VanLima1}, \cite{BarCruz}, \cite{BarCon1}, \cite{BaRoLi}, \cite{LeeParkPyo}, \cite{PeVeVi} and the references therein.

An important feature of minimal surfaces, among others, is that they arise naturally in the context of general relativity. In fact, in time-symmetric (totally geodesic) initial data sets, the concept of a minimal surface coincides with that of a marginally outer trapped surface (MOTS, for short), which plays a crucial role in the study of black holes in general relativity. Moreover, in this time-symmetric setting, the condition $R_M\geq 2\Lambda$ corresponds precisely to the so-called dominant energy condition, where $\Lambda\in\mathbb{R}$ represents the cosmological constant. Therefore, the results stated in Theorem \ref{splits1} may be viewed as local rigidity results for initial data sets in the time-symmetric case. Motivated by this analogy, many similar results in the non time-symmetric setting have been established in recent years. For example, analogues of items (1), (2) and (3) of Theorem \ref{splits1} were obtained by Galloway \cite{Gal1}, Galloway and Mendes \cite{GalMen1}, and Mendes \cite{mendes2019rigidity}, respectively. We note that Mendes \cite{mendes2019rigidity} also proved a non time-symmetric version of the rigidity theorem proved by Moraru in \cite{Mor1}. 

In this work, we are interested in the case where the initial data sets are endowed with an electric vector field. More precisely, we focus on initial data sets for the Einstein-Maxwell equations with vanishing magnetic field. Recently, Galloway and Mendes \cite{GalMen2} and Lima, Sousa, and Batista \cite{patoMOTS} obtained sharp area bounds for MOTS in this setting. Also, Mendes \cite{Mendescarga} proved that some area-charge inequalities obtained previously by Gibbons \cite{Gibb} and Dain, Jaramillo, and Reiris \cite{DainJaramilloReiris} in the time-symmetric case are sharp. He also generalized this to the non time-symmetric case.  We also note that Cruz and Mendes \cite{cruzmendescarga} obtained additional area-charge inequalities and rigidity results for time-symmetric initial data sets for the Einstein-Maxwell equations with vanishing magnetic field. 

Our main goal here is to generalize the area-charge inequalities and certain rigidity results obtained in \cite{GalMen2}, \cite{patoMOTS} and \cite{cruzmendescarga} to the context of free boundary MOTS  in initial data sets with boundary satisfying both the interior and boundary dominant energy conditions. For precise definitions, we refer the reader to Section \eqref{Prelsec}. We note that free boundary MOTS (more generally, capillary MOTS) were introduced by Alaee, Lesourd and Yau in \cite{alaee2021stable}, where they defined the notion of stability and proved some local rigidity results in this setting. Recently, further local rigidity theorems for free boundary MOTS were obtained by Mendes \cite{mendes2022rigidity}  and Almeida and Mendes \cite{AlmeidaMendes}. Our first result is the following:

\begin{theorem}[Area-charge inequalities]\label{areacharge} Let $(M^3,g,K,E)$ be  a 3-di\-men\-sion\-al initial data set for the Einstein-Maxwell equations with absence of magnetic field  with boundary $\partial 
M\neq \emptyset$. Suppose that $E(p)\in T_p(\partial M)$ for all $p\in\partial M$ and that $(M^3,g,K,E)$ satisfies the dominant energy conditions (DEC) (see \eqref{dec}) and (BDEC) (see \eqref{bdec}). Let $\Sigma^2 \subset M^3$ be a two-sided compact free boundary MOTS. If $\Sigma$ is stable, then  the charge and area of $\Sigma$ satisfy:
\begin{itemize}
\item[(i)] If $\Lambda>0$, then $\chi(\Sigma)=1$, $4\Lambda \mathfrak{q}(\Sigma)^2\leq 1$ and 
\begin{equation}\label{ineqlambdapos}
\dfrac{\pi}{\Lambda}\left(1-\sqrt{1-4\Lambda\mathfrak{q}(\Sigma)^2}\right)\leq |\Sigma| \leq \dfrac{\pi}{\Lambda}\left(1+\sqrt{1-4\Lambda\mathfrak{q}(\Sigma)^2}\right).
\end{equation}
\item[(ii)] If $\Lambda=0$, then $\chi(\Sigma)\geq 0$ and
\begin{equation}\label{ineqlambdazero}
4\pi^2\mathfrak{q}(\Sigma)^2\leq 2\pi \chi(\Sigma)|\Sigma|.
\end{equation}
In particular, if $\chi(\Sigma)=0$, then the charge of $\Sigma$ is zero, that is, $\mathfrak{q}(\Sigma)=0$, and if $\chi(\Sigma)=1$, then
\begin{equation}
|\Sigma|\geq 2\pi \mathfrak{q}(\Sigma)^2.
\end{equation}
\item[(iii)] If $\Lambda<0$, then
\begin{equation}\label{ineqlambdaneg}
|\Sigma|\geq \dfrac{\pi}{|\Lambda|}\left(-\chi(\Sigma)+\sqrt{\chi(\Sigma)^2+4|\Lambda|\mathfrak{q}(\Sigma)^2}\right).
\end{equation}
\end{itemize}
Here, $\mathfrak{q}(\Sigma)$ and $\chi(\Sigma)$ denote the electric charge and the Euler characteristic of $\Sigma$, respectively.
\end{theorem}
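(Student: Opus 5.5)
We follow the standard strategy for MOTS area bounds (Galloway–Schoen, Galloway–Mendes), adapted to the free boundary setting of Alaee–Lesourd–Yau. Stability of $\Sigma$ yields a positive principal eigenfunction $\phi$ of the MOTS stability operator $L$ with the Robin-type boundary condition along $\partial\Sigma$, and $\lambda_1\ge 0$. Let $X$ be the tangential vector field on $\Sigma$ dual to $K(\nu,\cdot)^{\top}$. After the usual rearrangement via the Gauss equation and the constraint equations, this gives
\[
-\Delta\phi+2\langle X,\nabla\phi\rangle+\Big(\tfrac12|X|^2-\divergente X\Big)\phi-\Big(K_\Sigma-\tfrac12(\mu+J(\nu))-\tfrac12|\chi^+|^2\Big)\phi=\lambda_1\phi\ge 0 ,
\]
where $\mu+J(\nu)$ is the energy–momentum term in the dominant energy condition. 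The boundary condition reads $\partial_\eta\phi=(\text{boundary term})\,\phi$, and the boundary term involves the geodesic curvature $\kappa$ of $\partial\Sigma$ in $\Sigma$ together with the boundary momentum quantity.

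We first handle the completing-the-square step. Divide by $\phi$, set $Y=X-\nabla\log\phi$, and integrate over $\Sigma$. Integrating by parts produces a boundary integral
\[
\int_{\partial\Sigma}\big(\langle X,\eta\rangle-\partial_\eta\log\phi\big)\,ds .
\]
Using the boundary condition for $\phi$ and Gauss–Bonnet with boundary, $\int_\Sigma K_\Sigma+\int_{\partial\Sigma}\kappa=2\pi\chi(\Sigma)$, the boundary integral combines with the interior terms. The (BDEC) hypothesis is exactly what makes the leftover boundary terms nonnegative. The outcome is
\[
\tfrac12\int_\Sigma(\mu+J(\nu))\,da\le 2\pi\chi(\Sigma).
\]

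Next we insert the charged DEC. In the Einstein–Maxwell setting with $B=0$, condition \eqref{dec} should read $\mu-|J|\ge \Lambda+|E|^2$ in the paper's normalization. Hence
\[
\Lambda|\Sigma|+\int_\Sigma |E|^2\,da\le 2\pi\chi(\Sigma).
\]
The charge is $\mathfrak q(\Sigma)=\frac1{4\pi}\int_\Sigma\langle E,\nu\rangle\,da$, with whatever constant the paper uses. The assumption $E\in T\partial M$ is what makes $\mathfrak q$ the natural flux, since the flux through the part of $\partial M$ bounded by $\partial\Sigma$ vanishes. Cauchy–Schwarz gives $\int_\Sigma|E|^2\ge (\int_\Sigma\langle E,\nu\rangle)^2/|\Sigma|=16\pi^2\mathfrak q^2/|\Sigma|$, modulo normalization. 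Multiplying by $|\Sigma|$ yields the quadratic inequality
\[
\Lambda|\Sigma|^2-2\pi\chi(\Sigma)|\Sigma|+c\,\pi^2\mathfrak q^2\le 0 .
\]
The constant $c$ is fixed by the paper's conventions, and $c=4$ matches \eqref{ineqlambdazero} and the stated roots.

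The three cases then follow from this quadratic. If $\Lambda>0$, the left side is positive unless $\chi(\Sigma)>0$. A compact surface with nonempty boundary has $\chi\le 1$, so $\chi=1$. The discriminant condition gives $4\Lambda\mathfrak q^2\le 1$, and $|\Sigma|$ must lie between the roots, which is \eqref{ineqlambdapos}. If $\Lambda=0$, the inequality is \eqref{ineqlambdazero} directly, and since $|\Sigma|>0$ we get $\chi\ge0$. If $\Lambda<0$, $|\Sigma|$ must be at least the positive root, which is \eqref{ineqlambdaneg}. The main obstacle is the first step: tracking the boundary terms so that Gauss–Bonnet's geodesic curvature term cancels exactly against the Robin term of the free boundary stability operator, with (BDEC) controlling the remainder.
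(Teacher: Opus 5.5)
Your overall route is the paper's, and your closing quadratic analysis is correct. The paper invokes Mendes's lemma $\lambda_1(L_0)\ge 0$ and tests the Rayleigh quotient with $u\equiv 1$, which is the same computation as integrating $\phi^{-1}L\phi\ge 0$ with $Y=X-\nabla\ln\phi$. The gap is the step you yourself call the main obstacle, which is the only new ingredient relative to the closed case. You assert its outcome rather than derive it, and you misdescribe the boundary condition. (Below, as in the paper, $N$ is the unit normal of $\Sigma$ and $\nu$ the outward conormal of $\partial\Sigma$, i.e.\ your $\nu$ and $\eta$.)

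The Robin condition in the definition of stability is $\partial_\nu\phi=\Pi^{\partial M}(N,N)\phi$; it contains neither $k_g$ nor a momentum term. Integrating $\phi^{-1}L\phi\ge 0$ therefore gives
\[
0\le\int_\Sigma Q\,da+\int_{\partial\Sigma}\big(\langle X,\nu\rangle-\Pi^{\partial M}(N,N)\big)\,d\ell ,
\]
and two identities are needed to handle the boundary integrand.
First, the free boundary condition gives $\nu=\bar N$ and $N\in T(\partial M)$ along $\partial\Sigma$. So if $T$ is the unit tangent of $\partial\Sigma$, then $\{T,N\}$ is an orthonormal frame of $T(\partial M)$ and $H^{\partial M}=\Pi^{\partial M}(T,T)+\Pi^{\partial M}(N,N)=k_g+\Pi^{\partial M}(N,N)$.
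Second, since $\langle N,\bar N\rangle=0$, one has $\langle X,\nu\rangle=K(N,\bar N)=(i_{\bar N}\pi)^\top(N)$.
Hence the boundary integrand equals $k_g-\big(H^{\partial M}-(i_{\bar N}\pi)^\top(N)\big)\le k_g$ by (BDEC), and Gauss--Bonnet with boundary finishes. Without these identities, the claim that (BDEC) makes the leftover boundary terms nonnegative is unsupported.

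Several formulas you do write also need correcting.
\begin{itemize}
\item The stability operator is $L\phi=-\Delta\phi+2\langle X,\nabla\phi\rangle+(Q-|X|^2+\divergente X)\phi$ with $Q=K_\Sigma-(\mu+J(N))-\frac12|\chi^+|^2$. With your coefficients ($+\frac12|X|^2$, $-\divergente X$, and a minus sign in front of the curvature bracket), completing the square leaves an uncontrolled $+\frac32|X|^2$ and bounds $\int_\Sigma Q$ from above rather than below.
\item Your intermediate bound $\frac12\int_\Sigma(\mu+J)\,da\le 2\pi\chi(\Sigma)$ is off by a factor $2$ and does not imply your next line. The correct bound is $\int_\Sigma(\mu+J(N))\,da\le 2\pi\chi(\Sigma)$.
\item You should not appeal to a principal eigenfunction: as the paper remarks, no general principal-eigenvalue theory is available for $L$ with this Robin condition. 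Use instead the $\phi\ge0$, $\phi\not\equiv0$ given by the definition of stability. It is positive on all of $\Sigma$ by the strong maximum principle and the Hopf lemma, since a boundary zero would force $\partial_\nu\phi<0$ there, contradicting $\partial_\nu\phi=\Pi^{\partial M}(N,N)\phi=0$.
\item The constant $c$ is not yours to choose. The paper normalizes $\mathfrak{q}(\Sigma)=\frac{1}{2\pi}\int_\Sigma\langle E,N\rangle\,da$ (half the area of $\mathbb{S}^2$), which gives exactly $4\pi^2\mathfrak{q}(\Sigma)^2$. Your $\frac{1}{4\pi}$ would yield a different inequality.
\end{itemize}
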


Next, by assuming some additional condition on $\Sigma$, $E$ and $K$, and that equality holds in some of the inequalities \eqref{ineqlambdapos}, \eqref{ineqlambdazero} and \eqref{ineqlambdaneg}, we prove the following local rigidity result:

\begin{theorem}[Local rigidity]\label{main}
Let $(M^3,g,K,E)$ be an initial data set as in Proposition \ref{areacharge}. Suppose, in addition, that the electric vector field $E$ is diverge-free, that is, $\divergente E=0$ and that $K$ is $2$-convex. Suppose that $\Sigma^2\subset M$ is a weakly outermost properly embedded compact free boundary MOTS. In addition, in the case $\Lambda> 0$, suppose that $\Sigma$ is outer area minimizing. Then all the area-charge inequalities \eqref{ineqlambdapos}, \eqref{ineqlambdazero} and \eqref{ineqlambdaneg} hold, according to the sign of $\Lambda$. Moreover,  if equality is attained, then there is a  neighborhood $V\cong [0,\varepsilon)\times \Sigma$ of $\Sigma$ in $M$ such that on $V$ we have:
\begin{itemize}
\item[(i)] $E=a N_t$ for some $a\in\mathbb{R}$, where $N_t$ is the unit normal vector field to $\Sigma_t\cong\{t\}\times \Sigma$ pointing in the direction of the foliation;
\item[(ii)] $(\Sigma,\gamma)$, where $\gamma$ is the metric on $\Sigma$ induced by $g$, has constant Gauss curvature $K_\Sigma=\Lambda+a^2$ and geodesic boundary $\partial\Sigma$, that is, $k_g=0$. In particular, if $\Lambda=0$ and $\Sigma$ is an annulus (i.e., $\chi(\Sigma)=0$), then $E=0$ on $V$. Moreover, the metric $g$ can be written on $V$ as $g=dt^2+\gamma$.
\item[(iii)] $K=f(t)dt^2$ on $V$ for some smooth function $f$ depending only on $t$;
\item[(iv)] $J=0$ on $V$;
\item[(v)] The dominant energy condition (DEC) saturates on $V$, that is, $\mu=\Lambda+a^2$ on $V$, and the boundary energy condition (BDEC) saturates on $\partial M\cap V$, that is, $H^{\partial M} = |(i_{\bar{N}}\pi)^T|=0$ on $\partial M\cap V$.
\end{itemize}
\end{theorem}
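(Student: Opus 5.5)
Since $\Sigma$ is weakly outermost, it is stable as a free boundary MOTS: otherwise the principal eigenfunction of the MOTS stability operator, with its Robin boundary condition, would let us push $\Sigma$ slightly outward to a strictly outer trapped free boundary surface, which is impossible. So Theorem \ref{areacharge} applies and all the inequalities \eqref{ineqlambdapos}, \eqref{ineqlambdazero} and \eqref{ineqlambdaneg} hold. For the rigidity part I will follow the Galloway--Mendes scheme, adapted to free boundary surfaces as in Alaee--Lesourd--Yau and Mendes.

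\textbf{Step 1: infinitesimal rigidity.} Go back through the proof of Theorem \ref{areacharge}. When equality holds, every inequality used there must be an equality. The principal eigenvalue of the stability operator vanishes, and its eigenfunction is constant. Then $\chi=\chi^0$ (so $\chi$ vanishes), the one-form $X$ vanishes on $\Sigma$, and the DEC and BDEC saturate along $\Sigma$ and $\partial\Sigma$. The geodesic curvature satisfies $k_g=0$. The equality case in Cauchy--Schwarz for the charge gives $\langle E,\nu\rangle=a$ constant and $E^T=0$, and also $K_\Sigma=\Lambda+a^2$, with the constant $a$ determined through $\mathfrak q$ and $|\Sigma|$.

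\textbf{Step 2: foliation.} Use the implicit function theorem with the free boundary condition to build a foliation $\Sigma_t$, $t\in[0,\varepsilon)$, of a one-sided neighborhood $V$. Each leaf meets $\partial M$ orthogonally and has constant outer null expansion $\theta(t)$; the lapse $\phi_t$ is positive and normalized to mean one. The $\Lambda$-term can be carried into this construction as in \cite{GalMen1}.

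\textbf{Step 3: sign of $\theta(t)$.} Write the evolution identity
\[
\theta'(t)\,\phi^{-1} = -\Delta\phi\,\phi^{-1}+2\langle X,\nabla\log\phi\rangle - |\nabla\log\phi|^2 + \tfrac12 S - (\mu+J(\nu)) - \tfrac12|\chi|^2 + \theta\,(\cdots),
\]
where $S=2K_{\Sigma_t}$. Then:
\begin{itemize}
\item Divide by $\phi$, integrate over $\Sigma_t$, and use the Robin-type boundary term. Together with BDEC this controls the boundary integral by $-\int k_g$.
\item Use Gauss--Bonnet and DEC in the form $\mu-|J|\ge\Lambda+|E|^2$.
\item Bound $\int|E|^2\ge \frac{(4\pi\mathfrak q)^2}{|\Sigma_t|}$ by Cauchy--Schwarz. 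Here $\mathfrak q(\Sigma_t)=\mathfrak q(\Sigma)$ because $\divergente E=0$ and $E$ is tangent to $\partial M$.
\item Use 2-convexity of $K$ to handle the term $\theta(\tr K - K(\nu,\nu))$, which lets the $\theta$-terms be absorbed.
\end{itemize}
The result should be a differential inequality
\[
\theta'(t)\int\phi^{-1}\le F(|\Sigma_t|)\quad\text{modulo } \theta \text{ terms},
\]
where $F$ is the quadratic area-charge expression, which vanishes at $|\Sigma|$ by equality. Now split by the sign of $\Lambda$:
\begin{itemize}
\item If $\Lambda\le0$, the inequality already gives $\theta\le0$.
\item If $\Lambda>0$, we need $|\Sigma_t|\ge|\Sigma|$. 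This comes from outer area minimization, and it puts $|\Sigma_t|$ on the correct side of the root of $F$.
\end{itemize}
In either case a standard ODE argument gives $\theta(t)\le0$. Weak outermostness then forces $\theta(t)=0$.

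I expect this step to be the main obstacle: arranging the signs of the boundary terms and of the $\Lambda>0$ quadratic so that $\theta'\le0$ follows.

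\textbf{Step 4: conclusion.} Once $\theta\equiv0$, every leaf $\Sigma_t$ also attains equality, so Step 1 applies to each leaf. From this:
\begin{itemize}
\item $\phi_t$ is constant, so after reparametrizing $g=dt^2+\gamma_t$.
\item $\chi_t=0$ and the DEC saturates, so the second fundamental form of each leaf equals $-K|_{\Sigma_t}$.
\item Since $E=aN_t$ with $a$ constant, and $\mathfrak q$ and the area are fixed, the curvatures $K_{\Sigma_t}=\Lambda+a^2$ are constant. Using $J=0$ and the momentum constraint, the leaves are totally geodesic, so $\gamma_t=\gamma$ and $K=f(t)\,dt^2$.
\end{itemize}
Saturation of BDEC gives $H^{\partial M}=|(i_{\bar N}\pi)^T|=0$ along $\partial M\cap V$. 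Finally, when $\Lambda=0$ and $\chi(\Sigma)=0$, Gauss--Bonnet with geodesic boundary forces $K_\Sigma=a^2=0$, hence $E=0$.
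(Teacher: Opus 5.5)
\textbf{There is a genuine gap in Step 3, the core of the argument.}

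After using the equality $2\pi\chi(\Sigma)=\Lambda|\Sigma|+4\pi^2\mathfrak{q}(\Sigma)^2/|\Sigma|$, the right-hand side of your differential inequality is
$F(|\Sigma_t|)=\Lambda(|\Sigma|-|\Sigma_t|)+4\pi^2\mathfrak{q}(\Sigma)^2\frac{|\Sigma_t|-|\Sigma|}{|\Sigma||\Sigma_t|}$.

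For $\Lambda\le 0$ this equals $C(t)(|\Sigma_t|-|\Sigma|)$ with $C(t)>0$ (unless $\Lambda=\mathfrak{q}=0$). Nothing in the hypotheses gives $|\Sigma_t|\le|\Sigma|$, so the inequality does not ``already give'' $\theta\le 0$.

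For $\Lambda>0$, the bound $|\Sigma_t|\ge|\Sigma|$ puts you on the favourable side only when $|\Sigma|$ is the larger root. If equality holds in the lower bound of \eqref{ineqlambdapos} and $4\Lambda\mathfrak{q}^2<1$, then $F(|\Sigma_t|)\ge 0$ for $|\Sigma_t|$ slightly above $|\Sigma|$.

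The missing idea is to control area growth by $\theta$ itself.
\begin{itemize}
\item By the first variation of area for the free boundary leaves, $|\Sigma_t|-|\Sigma|=\int_0^t\int_{\Sigma_s}H(s)\varphi_s\,da_s\,ds$.
\item $2$-convexity of $K$ gives $\tr_{\Sigma_s}K\ge0$, i.e.\ $H(s)\le\theta(s)$. This is where $2$-convexity is actually used. The $\theta\tau$ term in the evolution identity is simply kept on the left, and the $-\theta^2/2$ term is dropped.
\item For $\Lambda>0$, discard the $\Lambda$-term by outer area minimization and bound the charge term by $\frac{4\pi^2\mathfrak{q}^2}{|\Sigma|^2}(|\Sigma_t|-|\Sigma|)$; this bound holds for either sign of $|\Sigma_t|-|\Sigma|$.
\item For $\Lambda<0$, absorb the $\Lambda$-term into $C(t)=-\Lambda+4\pi^2\mathfrak{q}^2/(|\Sigma||\Sigma_t|)$.
\end{itemize}
You then reach the integro-differential inequality $\theta'\eta-\theta\rho\le\int_0^t\theta\,\xi\,ds$. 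Lemma~\ref{Lemma f nonnegative} then yields $\theta\equiv0$. That lemma needs $\theta\ge0$ (from weak outermostness) as an \emph{input}, not merely afterwards. A plain integrating-factor ODE argument cannot close this step.

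\textbf{Step 4 claims more than infinitesimal rigidity provides.}

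Equality gives $\chi^+=0$, $Q=0$, $E=aN$, $J|_{T\Sigma}=0$ and $|J|=-J(N)$. It does not give $X=0$ or a constant eigenfunction: the equality case only forces $X=\nabla\ln\phi$ for the positive eigenfunction $\phi$. It does not give $J=0$ either, and ``$J=0$ plus the momentum constraint'' is not an argument for totally geodesic leaves.

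In the paper, once $\theta\equiv0$ the whole chain collapses.
\begin{itemize}
\item Equality in $H\le\theta$ forces $\tr_{\Sigma_t}K=0$, so the leaves are minimal, and $|\Sigma_t|=|\Sigma|$, so the constant $a$ is the same on every leaf.
\item Then $0=-H'(t)/\varphi_t$, combined with the Gauss equation and DEC saturation, gives $\Delta\varphi_t/\varphi_t+\frac12|A_{\Sigma_t}|^2+\frac12|K|_{T\Sigma_t}|^2+|X^t|^2+|J|=0$.
\item Integrating with the Robin condition $\partial_{\nu_t}\varphi_t=\Pi^{\partial M}(N_t,N_t)\varphi_t=|(i_{\bar N}\pi)^\top|\varphi_t\ge0$ kills every term at once.
\end{itemize}
This yields constant lapse, totally geodesic leaves, $K|_{T\Sigma_t}=0$, $X^t=0$, $J=0$ and $H^{\partial M}=|(i_{\bar N}\pi)^\top|=0$. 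BDEC saturation alone gives only the equality $H^{\partial M}=|(i_{\bar N}\pi)^\top|$, not that both vanish.

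\textbf{Stability.} Your opening deduction of stability from weak outermostness presupposes a positive principal eigenfunction of $L$ with the Robin condition $B\phi=0$. Remark~\ref{woimpliesstable} notes that this is not available in general; Alaee--Lesourd--Yau obtain it only under $\sup_{\partial\Sigma}\Pi^{\partial M}(N,N)\le0$. The paper's proof simply takes stability of $\Sigma$ as given.

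\textbf{Normalization.} With the paper's normalization of the charge, Cauchy--Schwarz gives $\int|E|^2\ge4\pi^2\mathfrak{q}^2/|\Sigma_t|$, not $(4\pi\mathfrak{q})^2/|\Sigma_t|$.
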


\begin{remark}
We note that, in the case $\Lambda=0$ and $\Sigma$ is an annulus, the local rigidity in Theorem \ref{main} can be obtained as a consequence of Theorem 1.4 of Mendes \cite{mendes2022rigidity}.
\end{remark}

This paper is organized as follows. In Section \ref{Prelsec} we present preliminary definitions and results  concerning initial data sets, MOTS and stability that will be used throughout the paper. In Section \ref{areachargesec}, we prove the area-charge estimates for stable MOTS presented in Theorem \ref{areacharge}. Finally, in Section \ref{mainproofsec} we give the proof of the local rigidity theorem \ref{main}.

\section{Preliminaries}\label{Prelsec}

\subsection{Basic definitions}
An \textit{initial data set for the Einstein-Maxwell equations with absence of magnetic field} is a tuple $(M^{n+1},g,K,E)$, where $(M,g)$ is a smooth Riemannian manifold (possibly with nonempty boundary), $K$ is a smooth symmetric $(0,2)$-tensor and $E\in \mathfrak{X}(M)$ is a smooth vector field representing the electric field. 

The following function and 1-form 
\begin{equation}\label{muJ}
\mu=\dfrac{1}{2}(R_M-|K|^2+(\tr K)^2) \ \ \mbox{and} \ \ J= \divergente(K-(\tr K)g),
\end{equation}
where $R_M$ stands for the scalar curvature of $(M,g)$, are the \textit{local energy density}  and \textit{local current density} of the initial data set, respectively.

It is said that $(M^{n+1},g,K,E)$ satisfies the \textit{dominant energy condition (DEC)} if 
\begin{equation}\label{dec}
\mu-|J|\geq \Lambda + |E|^2,    
\end{equation}
where $\Lambda\in\mathbb{R}$ is a constant representing the cosmological constant.

Let $\Sigma^n\subset M^{n+1}$ be a smooth connected two-sided hypersurface. Denote by $N$ the unit normal vector field to $\Sigma$, which is unique up to a sign. We will denote the second fundamental form and the mean curvature of $\Sigma$ with respect to $N$ by $A$ and $H$, respectively. Here we are adopting the following convention: $A(X,Y)=g(\overline{\nabla}_XN,Y)$ for all $X,Y\in T\Sigma$, where $\overline{\nabla}$ denotes the Riemannian connection of $(M,g)$.

The \textit{null second fundamental forms} of $\Sigma$ in $(M,g,K,E)$ are given by
\begin{equation}\label{null2ndff}
\chi^\pm=K\pm A. 
\end{equation}
Its traces  are called the \textit{null mean curvatures} of $\Sigma$. More precisely, we have 
\begin{equation}\label{nullmeancurvature}
\theta^\pm=\tr_\Sigma K\pm H.
\end{equation}

As usual in the literature, we will use the following terminology:
\begin{itemize}
\item $\Sigma$ is an \textit{outer trapped hypersurface} if $\theta^+< 0$;
\item $\Sigma$ is a \textit{weakly outer trapped hypersurface} if $\theta^+\leq0$;
\item $\Sigma$ is a \textit{marginally outer trapped hypersurface} (\textit{MOTS}, for short) if $\theta^+=0$. 
\end{itemize}

Now consider a smooth variation $t\in (-\varepsilon,\varepsilon)\mapsto \Sigma_t\subset M$, $\Sigma_0=\Sigma$, of $\Sigma$ in $M$. Let $V=\frac{\partial }{\partial t}|_{t=0}$ be the variational vector field of the variation along $\Sigma$ and suppose that $V=\varphi N$, for some $\varphi\in C^\infty(\Sigma)$. For each $t\in (-\varepsilon,\varepsilon)$, let $N_t$ be the unit normal vector along $\Sigma_t$, depending smoothly on $(t,x)$ and such that $N_0=N$, and let $\theta^+(t)$ denote the null mean curvature of $\Sigma_t$ with respect to $N_t$. In this case, it is well-known that
\begin{equation}\label{variationoftheta}
(\theta^+)^\prime(0)=\dfrac{\partial}{\partial t}\theta^+(t,\cdot)|_{t=0}= L\varphi + \left(\theta^+\tau -\dfrac{1}{2}(\theta^+)^2\right)\varphi,
\end{equation} 
where  
\begin{equation}\label{stabilityoperator}
L\varphi=-\Delta\varphi + 2\langle X,\nabla \varphi\rangle + \left(Q-|X|^2+\divergente  X\right)\varphi, 
\end{equation}
$\tau = \tr K$ and 
\begin{equation}\label{defofQ}
Q=\dfrac{1}{2}R^\Sigma - (\mu+J(N))-\dfrac{1}{2}|\chi^+|^2.
\end{equation}
In the above, $\Delta$, $\nabla$ and  $R_\Sigma$ denotes the Laplacian operator, the gradient and  the scalar curvature of $\Sigma$ with respect to the induced metric, respectively, and $X$ is the vector field tangent to $\Sigma$ dual to the 1-form $K(N,\cdot)$.

When $\Sigma$ is a MOTS, the operator $L$ given by \eqref{stabilityoperator} is usually called the \textit{stability operator of $\Sigma$}. This terminology is motivated by the time-symmetric case (i.e., $K=0$). In this situation, a MOTS is just a minimal hypersurface, and $L$ coincides with 
$$
-\Delta-(\Ric(N,N)+|A|^2),
$$ 
which is the classical stability operator associated with minimal hypersurfaces.  The notion of stability for MOTS will be discussed in detail in Subsection \ref{stabilityofmots}.

\subsection{Further definitions in case where $\partial M\neq\emptyset$ and $\partial\Sigma \neq \emptyset$}

In this work, we are interested in compact MOTS with free boundary in initial data sets $(M,g,K,E)$ with $\partial M\neq \emptyset$. In this subsection, we present the additional definitions and dominant energy condition in this setting. 

Suppose from now on that $\partial M\neq \emptyset$. Let $\bar{N}$ be the outward unit vector field of $\partial M$ in $M$ ad let $\Pi^{\partial M}(X,Y)=\langle \overline{\nabla}_X\bar{N},Y\rangle$, $X,Y\in T(\partial M)$, be the second fundamental form of $\partial M$ in $M$ with respect to $\bar{N}$. The mean curvature of $\partial M$ with respect to $\bar{N}$, that is, the trace of $\Pi^{\partial M}$, will be denoted by $H^{\partial M}$.

We say that $(M^{n+1},g,K,E)$ satisfies the \textit{boundary dominant energy condition (BDEC)} if
\begin{equation}\label{bdec}
H^{\partial M}\geq |(i_{\bar{N}}\pi)^\top|,
\end{equation}
where $\pi=K-(\tr K)g$ is the \textit{momentum tensor} of $(M,g,K,E)$ and $(i_{\bar{N}} \pi)^\top$ is the 1-form on $\partial M$ given by $(i_{\bar{N}}\pi)^\top(X)=\pi(\bar{N},X)$ for all $X\in T(\partial M)$. Regarding the motivation for the dominant energy condition (BDEC) defined by \eqref{bdec} we refer the reader to \cite{almaraz} (see Remark 2.7), where it was first introduced by Almaraz, de Lima and Mari.

When $\partial M \neq \emptyset$, we will always assume that the electric field $E$ is tangent to $\partial M$, that is,
$E(p) \in T_p(\partial M)$ for all $p \in \partial M$.

Let $\Sigma^n \subset M^{n+1}$ be a compact two-sided hypersurface with $\partial\Sigma\neq \emptyset$ such that
$\partial \Sigma \subset \partial M$.
Let $N$ be a unit normal vector field along $\Sigma$.
Since there is no contribution to the electric charge arising from $\partial M$, we define the
\textit{charge of $\Sigma$} as
\begin{equation}\label{chargedef}
\mathfrak{q}(\Sigma)=\frac{2}{\omega_n}\int_\Sigma\langle E,N\rangle\,dv.
\end{equation}
Note that, in contrast to the case where $\Sigma$ has no boundary, here we normalize the above integral by half the volume $\omega_n$ of the unit sphere $\mathbb{S}^n \subset \mathbb{R}^{n+1}$.

\subsection{Stability of MOTS}\label{stabilityofmots}
Suppose now that $\Sigma^n\subset (M^{n+1},g,K,E)$ is  a  MOTS. 

Assume first that $\Sigma$ is closed, that is, compact without boundary. In the time-symmetric case (i.e., $K=0$), where a MOTS reduces to a minimal hypersurface, the notion of stability means, as usual, that the second variation of the volume functional is nonnegative for all normal variations of $\Sigma$ in $M$. From an analytical point of view, this is equivalent to say that the first eigenvalue $\lambda_1(L)$ of the stability operator $L=-\Delta -(\Ric(N,N)+|A_\Sigma|^2)$ is nonnegative. Moreover, it is well-known that $\lambda_1(L)\geq 0$ if and only if there is a \textit{positive} smooth function $\phi\in C^\infty(\Sigma)$ such that $L\phi \geq 0$. 

Inspired by the above, Andersson, Mars and Simon \cite{ALS} introduced a notion of stability for MOTS in the non-time-symmetric case (i.e., $K \not\equiv 0$). In this context, the stability operator $L$ given by \eqref{stabilityoperator} is not self-adjoint in general, but  it has a real eigenvalue, called the \textit{principal eigenvalue}, such that for any other eigenvalue $\lambda$ of $L$ the inequality $\lambda_1(L)\leq \Real(\lambda)$ holds. Moreover, the corresponding eigenfunction $\phi$ , $L\phi=\lambda_1(L)\phi$, is unique up to a multiplicative constant and can be chosen to be real and everywhere positive. Then, as defined in \cite{ALS}, we say that $\Sigma$ is a \textit{stable} MOTS if $\lambda_1(L)\geq 0$, in analogy with the time-symmetric case. In \cite{ALS},  it is proved the condition $\lambda_1(L)\geq 0$ is equivalent to the existence of a \textit{positive} function $\phi\in C^\infty(\Sigma)$ such that $L\phi\geq 0$.

Recently, Alaee, Lesourd and Yau \cite{alaee2021stable}, defined a notion of stability for capillary MOTS in initial data sets with nonempty boundary. 

Let $\Sigma^n\subset (M^{n+1},g,K,E)$ be a MOTS and suppose that $\partial M\neq\emptyset$ and $\partial\Sigma\neq\emptyset$. We say that $\Sigma$ is \textit{capillary} if $\Sigma$ meets $\partial M$ at a constant angle $\theta\in (0,\pi)$ along $\partial\Sigma$. If $\nu$ denotes the unit conormal of $\partial\Sigma$ in $\Sigma$, then this condition is equivalent to say that $\partial\Sigma\subset \partial M$ and $\langle \bar{N},\nu\rangle=\sin \theta$ on $\partial\Sigma$. In the case $\theta=\pi/2$, we call $\Sigma$ a \textit{free boundary MOTS} and this means that $\nu=\bar{N}$ and $N\in T(\partial M)$ on $\partial\Sigma$.  We will focus here in the free boundary case. Following \cite{alaee2021stable}, we say that $\Sigma$ is a \textit{stable free boundary MOTS} if there is a smooth function $\phi\geq 0$, $\phi\not\equiv 0$, defined on $\Sigma$ such that
\begin{equation}
\left\{
\begin{array}{rc}
L\phi  \geq  0 &  \mbox{in $\Sigma$}\\
B\phi=\dfrac{\partial \phi}{\partial\nu}-\Pi^{\partial M}(N,N)\phi =  0 & \mbox{on $\partial \Sigma$}.
\end{array}
\right.
\end{equation}
For details and motivation about the above definition we refer the reader to \cite{alaee2021stable}.

Finally, suppose that $\Sigma \subset (M,g,K,E)$ is a free boundary MOTS that separates $M$ into two regions $M^-$ and $M^+$. The notation is chosen such that the unit normal vector $N$ points toward $M^+$. We call $M^+$ the exterior of $\Sigma$. We say that $\Sigma$ is \textit{weakly outermost} if there is no outer trapped $(\theta^+<0)$ free boundary hypersurface $\widehat{\Sigma} \subset M^+$ homologous to $\Sigma$. 

\begin{remark}\label{woimpliesstable}
It is well known that a closed weakly outermost MOTS $\Sigma$ in an initial data set $(M^3,g,K)$ is stable. In fact, if $\Sigma$ is not stable, then $\lambda_1(L)<0$, where $\lambda_1(L)$ is the principal eigenvalue of the stability operator $L$ of $\Sigma$. Thus, if $\Sigma_t$, $t\in[0,\varepsilon)$,  is a variation of $\Sigma=\Sigma_0$ in $M$ with the variational vector field equal to $V=\phi N$, where $\phi>0$ is an eigenfunction of $\phi$ associated with $\lambda_1(L)$, then $(\theta^+)^\prime(0)=L\phi=\lambda_1(L)\phi<0$. Therefore, $\theta^+(t)<0$ for all $t>0$ sufficiently small. This would imply that $\Sigma$ is not weakly outermost. 

In the free boundary case, there is no general existence result for the principal eigenvalue $\lambda_1(L)$ of the stability operator with Robin boundary condition $B\phi=0$. Therefore, we cannot use the above argument in general in order to prove that a weakly outermost \textit{free boundary} MOTS is stable. It would be interesting to prove this is true in full generality. In \cite{alaee2021stable}, they were able to prove this with the additional assumption that $\sup_{\partial\Sigma} \Pi^{\partial M}(N,N)\leq 0$. In this case, they proved the existence of the principal eigenvalue $\lambda_1(L)$ of $L$ with Robin condition $B\phi=0$ (see Theorem 6.8 of \cite{alaee2021stable}). In particular, under this additional assumption, we can remove the stability hypothesis on $\Sigma$ from the statement of Theorem \ref{main}.
\end{remark}

\subsection{Useful lemmas}
In this subsection we list some lemmas that will be useful in the proofs of our results. 

Let $(M^{n+1},g,K,E)$ be an initial data set with $\partial M\neq \emptyset$ and let $\Sigma^{n}\subset M^{n+1}$ be a two-sided compact MOTS with free boundary. Since its stability operator $L$ (see \eqref{stabilityoperator}) is not self-adjoint in general, it is convenient to consider the ``symmetrized'' operator $L_0:C^\infty(\Sigma)\to C^\infty(\Sigma)$ given by
\begin{equation}\label{symoperator}
L_0\varphi = -\Delta \varphi + Q\varphi.
\end{equation}

Let $\lambda_1(L_0)$ be the first eigenvalue of $L_0$ with the Robin-type boundary condition $B_0\varphi=B\varphi+\langle X, \nu \rangle \varphi=0$ on $\partial\Sigma$. Note that $\lambda_1(L_0)$ is characterized variationally as the infimum of the following Rayleigh quotient
\begin{equation}\label{Rayleigh}
 \lambda_{1}(L_0) = \inf _{u\in C^{\infty}(\Sigma)\setminus \{0\}} \dfrac{\int_\Sigma\left(|\nabla u|^2+Qu^2\right)dv-\int_{\partial\Sigma} \left(\Pi^{\partial M}(N,N)-\langle X,\nu\rangle\right)u^2da}{\int_\Sigma u^2\,dv},
\end{equation}
where the infimum is taken over all $u\in C^\infty(\Sigma)\setminus\{0\}$.

The following lemma from  \cite{mendes2022rigidity} tells us that $\lambda_1(L_0)\geq 0$ if $\Sigma$ is stable.

\begin{lemma}\label{estimativeeigenvalue}
Let $\Sigma^n\subset (M^{n+1},g,K,E)$ be two-sided compact MOTS with free boundary. If $\Sigma$ is stable, then $\lambda_1(L_0)\geq 0$.
\end{lemma}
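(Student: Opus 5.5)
The plan is the standard Galloway--Schoen symmetrization trick, adapted to the Robin boundary condition. Stability provides $\phi\ge 0$, $\phi\not\equiv 0$, with $L\phi\ge 0$ in $\Sigma$ and $\partial_\nu\phi=\Pi^{\partial M}(N,N)\phi$ on $\partial\Sigma$. The first step is to show $\phi>0$ on all of $\Sigma$, boundary included. In the interior this follows from the strong maximum principle applied to $-\Delta\phi+2\langle X,\nabla\phi\rangle+c\phi\ge 0$; since $\phi\ge 0$, the sign of the zeroth-order coefficient can be handled by the usual splitting $c=c^+-c^-$. At a boundary point the Hopf lemma applies: if $\phi(p)=0$ there, then $\partial_\nu\phi(p)<0$, while the Robin condition gives $\partial_\nu\phi(p)=\Pi^{\partial M}(N,N)\phi(p)=0$, a contradiction. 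With $\phi>0$ we may set $f=\log\phi$.

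The second step is to rewrite $L\phi\ge 0$ in terms of $f$. Dividing by $\phi$ gives
\[
-\Delta f-|\nabla f|^2+2\langle X,\nabla f\rangle+Q-|X|^2+\divergente X\ge 0 ,
\]
that is,
\[
\divergente(X-\nabla f)-|X-\nabla f|^2+Q\ge 0 .
\]
On $\partial\Sigma$ the Robin condition reads $\langle\nabla f,\nu\rangle=\Pi^{\partial M}(N,N)$.

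The third step is to take an arbitrary test function $u\in C^\infty(\Sigma)$, multiply the inequality by $u^2$ and integrate. Setting $Y=X-\nabla f$ and integrating by parts,
\[
\int_\Sigma u^2\divergente Y=\int_{\partial\Sigma}u^2\langle Y,\nu\rangle-\int_\Sigma 2u\langle\nabla u,Y\rangle .
\]
The cross term is absorbed by Cauchy--Schwarz, $2u\langle\nabla u,Y\rangle\le|\nabla u|^2+u^2|Y|^2$, so the $|Y|^2$ terms cancel. The boundary term is
\[
\int_{\partial\Sigma}u^2\bigl(\langle X,\nu\rangle-\Pi^{\partial M}(N,N)\bigr).
\]
Combining these yields
\[
\int_\Sigma\bigl(|\nabla u|^2+Qu^2\bigr)-\int_{\partial\Sigma}\bigl(\Pi^{\partial M}(N,N)-\langle X,\nu\rangle\bigr)u^2\ge 0 ,
\]
which is exactly nonnegativity of the numerator of the Rayleigh quotient \eqref{Rayleigh}. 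Hence $\lambda_1(L_0)\ge 0$.

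The main obstacle is the first step, the strict positivity of $\phi$ up to the boundary, because the operator is not self-adjoint and the maximum principle has to be combined with the Robin condition. If one wished to avoid the Hopf lemma, an alternative is to use $\log(\phi+\epsilon)$ and let $\epsilon\to 0$, but that produces error terms at the boundary. For that reason I would argue positivity via the strong maximum principle and Hopf lemma as described above.
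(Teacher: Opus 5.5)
Your proof is correct: strict positivity of $\phi$ up to $\partial\Sigma$ via the strong maximum principle and the Hopf lemma uses connectedness of $\Sigma$, which is a standing assumption, and the substitution $f=\log\phi$ with the Robin condition produces exactly the boundary term $\int_{\partial\Sigma}(\langle X,\nu\rangle-\Pi^{\partial M}(N,N))u^2$, so this is the standard Galloway--Schoen symmetrization adapted to the free boundary. The paper gives no argument of its own and cites \cite{mendes2022rigidity} for this lemma; as far as I can tell, that reference proves it in essentially the way you do.
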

\begin{proof}
We refer the reader to \cite{mendes2022rigidity}.
\end{proof}

We finish this subsection with the following useful lemma from \cite{mendes2019rigidity}.

\begin{lemma}\label{Lemma f nonnegative}
    Let $f \in C^{1}[0,\varepsilon)$ and $\eta , \xi , \rho \in C^{0}([0,\varepsilon )$ be functions satisfying $\max\{f, \rho \} \geq 0, \xi\geq 0, \eta >0,$ and $f(0)=0$. If
    $$f'(t)\eta (t) - f(t)\rho (t) \leq \int_{0}^{t}f(s)\xi(s)ds, \ \ \forall\, t\in [0,\varepsilon), $$
    then $f\leq 0$. In particular, if $f$ is nonnegative, then $f\equiv 0.$
\end{lemma}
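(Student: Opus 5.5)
We argue by contradiction combined with a Gronwall-type estimate applied to the running maximum of $f^+=\max\{f,0\}$. Fix an arbitrary $T\in(0,\varepsilon)$; it suffices to show $f\leq 0$ on $[0,T]$. Since $\eta,\rho,\xi$ are continuous and $\eta>0$ on the compact interval $[0,T]$, there is a constant $C>0$ with $|\rho|/\eta\leq C$ and $\xi/\eta\leq C$ on $[0,T]$. We then introduce
$$g(t)=\max_{s\in[0,t]} f^+(s),\qquad t\in[0,T].$$
This is continuous, nondecreasing, nonnegative, and $g(0)=0$ because $f(0)=0$.

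\textbf{Step 1: a derivative bound where $f>0$.} Let $t\in[0,T]$ with $f(t)>0$. Since $\xi\geq 0$, we have $\int_0^t f\xi\,ds\leq \int_0^t f^+\xi\,ds$. Dividing the hypothesis by $\eta(t)>0$ gives
$$f'(t)\leq \frac{\rho(t)}{\eta(t)}f(t)+\frac{1}{\eta(t)}\int_0^t f^+(s)\xi(s)\,ds\leq C f(t)+\frac{\max_{[0,t]}\xi}{\eta(t)}\int_0^t f^+\,ds .$$
Enlarging $C$ if needed (to absorb $\max_{[0,T]}\xi/\min_{[0,T]}\eta$), we get
$$f'(t)\leq C\,g(t)+C\,T\,g(t)=C'g(t),\qquad C'=C(1+T).$$
Only the sign information $\xi\geq0$ and $\eta>0$ is used here. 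The hypothesis $\max\{f,\rho\}\geq0$ (i.e.\ $\rho\geq 0$ wherever $f<0$) is not even needed for this direction, since we only estimate $f'$ on the set $\{f>0\}$.

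\textbf{Step 2: integrating the bound.} Fix $t\in[0,T]$ with $f(t)>0$, and let $s=\sup\{\sigma\in[0,t]: f(\sigma)\leq 0\}$, which exists because $f(0)=0$. By continuity $f(s)=0$, and $f>0$ on $(s,t]$. Integrating Step 1 over $(s,t]$ gives
$$f(t)=\int_s^t f'(\sigma)\,d\sigma\leq C'\int_s^t g(\sigma)\,d\sigma\leq C'\int_0^t g(\sigma)\,d\sigma .$$
The same bound trivially holds when $f(t)\leq 0$. Since the right-hand side is nondecreasing in $t$, taking the maximum over $[0,t]$ yields
$$g(t)\leq C'\int_0^t g(\sigma)\,d\sigma\qquad\text{for all } t\in[0,T].$$

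\textbf{Step 3: conclusion.} Gronwall's inequality, applied to the continuous nonnegative function $g$ with $g(0)=0$, forces $g\equiv 0$ on $[0,T]$. Hence $f\leq 0$ on $[0,T]$, and since $T<\varepsilon$ was arbitrary, $f\leq 0$ on $[0,\varepsilon)$. If moreover $f\geq 0$, then $f\equiv 0$.

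The only delicate point is that the integral term $\int_0^t f\xi$ involves the whole past of $f$, including regions where $f$ may be negative. Working with the running maximum $g$ of $f^+$, rather than with $f$ itself, is what handles this cleanly and lets the argument close via Gronwall.
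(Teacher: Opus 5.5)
The paper does not prove this lemma itself; it quotes it from Mendes \cite{mendes2019rigidity}, so there is no in-paper argument to compare yours against.

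Your proof is correct and self-contained. The key steps all check out:
\begin{itemize}
\item The choice of $s$ as the last zero of $f$ before $t$ is legitimate. The set $\{f\le 0\}\cap[0,t]$ is closed and contains $0$, and continuity from the right then gives $f(s)=0$.
\item The bound $f'\le C'g$ on $(s,t]$ uses only $\xi\geq 0$, $\eta>0$ and compactness of $[0,T]$.
\item The inequality $g(t)\le C'\int_0^t g$, with $g\ge 0$ continuous, forces $g\equiv 0$ by Gronwall.
\end{itemize}

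What is genuinely different is that you never use $\max\{f,\rho\}\geq 0$, and you are right that it is superfluous. That hypothesis is what one needs to control $f'$ at points where $f<0$. There $-f\rho\geq 0$ holds only if $\rho\geq 0$, and that sign is what lets one drop the $f\rho$ term and keep $f'\eta\le\int_0^t f^+\xi$. So it is needed if one integrates a bound for $f'$ over an interval on which $f$ changes sign. You integrate only over $(s,t]$, where $f>0$ and $f\rho\le|\rho|f$ whatever the sign of $\rho$. This sidesteps the issue and gives a slightly stronger statement.

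In the paper's application the difference is immaterial. There $f=\theta\geq 0$ by weak outermostness, so the hypothesis holds trivially, and only the conclusion $\theta\equiv 0$ is used.
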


\section{Area-charge inequalities and infinitesimal rigidity}\label{areachargesec}
We begin this section with the proof of Proposition \eqref{areacharge}.
\begin{proof}[Proof of Proposition \ref{areacharge}]
Since $\Sigma $ is a stable MOTS with free boundary, it follows from Lemma \eqref{estimativeeigenvalue} that $\lambda_1(L_0)\geq 0$. Thus,  by Rayleigh's formula \eqref{Rayleigh} we  have 

\begin{equation}
     0 \leq \int_{\Sigma}(|\nabla u|^2 + Qu^2)\,da - \int_{\partial \Sigma}(\Pi^{\partial M}(N,N) - \langle X,\nu \rangle )u^2\,d\ell,
\end{equation}
for all $u\in C^\infty(\Sigma)$. 
Taking $u \equiv 1$, we obtain
\begin{align}\label{estimative area}
  0  
    \leq& \int_{\Sigma}Q\,da - \int_{\partial \Sigma}(\Pi^{\partial M}(N,N) - \langle X,\nu \rangle )\,d\ell \\
    =& \int_{\Sigma}(K_{\Sigma} - (\mu + J(N)) - \frac{1}{2}|\chi^{+}| ^2)\,da\nonumber\\
    &- \int_{\partial \Sigma}(\Pi^{\partial M}(N,N)
    - \langle X,\nu \rangle) \,d\ell\nonumber,
\end{align}
where we have used that $Q=K_\Sigma -(\mu+J(N)) -\dfrac{1}{2}|\chi^+|^2$. Here $K_\Sigma$ stands for the Gauss curvature of $\Sigma$.

Since $\Sigma$ is a free boundary surface, we have, 
\begin{align}\label{free boundary condition}
    H^{\partial M} = k_g + \Pi^{\partial M}(N,N)
\end{align}
where $k_g$ denotes the geodesic curvature of $\partial \Sigma$ in $\Sigma$ and $H^{\partial M}$ denotes the mean curvature of $\partial M$.  Furthermore, we have
\begin{equation}\label{iota subs}
    (\iota_{\bar{N}}\pi)^{\top}(N) = K(\bar{N},N) - (\tr K)\langle \bar{N},N\rangle 
    = \langle X,\nu \rangle.
\end{equation}
Substituting \eqref{free boundary condition} and \eqref{iota subs} in \eqref{estimative area} we obtain
\begin{equation}\label{auxneq2}
\begin{array}{rcl}
    0 
    &\leq & \int_{\Sigma}\left(K_\Sigma - (\mu + J(N)) - \frac{1}{2}|\chi^{+}|^2\right)\,da\vspace{0.2cm}
    \\
    &&- \int_{\partial \Sigma}\left(H^{\partial M} - k_g - (\iota _{\bar{N}}\pi)^{\top}(N) \right)\,d\ell\vspace{0.2cm}   \\ 
    &\leq& \int_{\Sigma}K_{\Sigma}\,da + \int_{\partial \Sigma}k_gds - \int_{\Sigma}\left(\mu + J(N)\right)\,da \vspace{0.2cm}
    \\
    &&- \int_{\partial \Sigma}\left(H^{\partial M} - (\iota_{\bar{N}}\pi)^{\top}(N)\right)\,d\ell .
    \end{array}
\end{equation}
Since $\mu + J(N) \geq \mu - |J| \geq  \Lambda + |E|^2$ in $\Sigma$ and $H^{\partial M} \geq |(\iota _{\bar{N}}\pi )^{\top}|\geq (\iota_{\bar{N}}\pi)^\top(N)$ on $\partial\Sigma$, and invoking the Gauss-Bonnet theorem, we get that
\begin{equation}\label{auxineq}
    0  \leq 2\pi \chi(\Sigma)  -  \Lambda|\Sigma| - \int_{\Sigma}|E|^{2}\,da .
\end{equation}
Next, by applying the Cauchy-Schwarz inequality to the definition of $\mathfrak{q}(\Sigma)$ \eqref{chargedef}, we obtain the following:
\begin{equation}\label{cauchy-schwarz}
    4\pi ^2\mathfrak{q}(\Sigma)^2 = \Big(\int_{\Sigma} \langle E,N\rangle \,da\Big)^2 
    \leq |\Sigma|\int_\Sigma\langle E,N\rangle^2\,da\leq |\Sigma|\int_{\Sigma}|E|^2\,da.
\end{equation}
Using the above inequality in \eqref{auxineq}, we have
\begin{equation}\label{ineqpolynomial2}
    0 \leq 2\pi \chi (\Sigma) - \Lambda|\Sigma|  - \dfrac{4\pi ^2\mathfrak{q}(\Sigma)^2}{|\Sigma|},
\end{equation}
which is equivalent to
\begin{equation}\label{inequalitypolynomial}
0\leq -\Lambda |\Sigma|^2+2\pi \chi(\Sigma)-4\pi^2 \mathfrak{q}(\Sigma)^2.
\end{equation}

Now, let us analyze each case depending on the sign of $\Lambda$:
\begin{itemize}
    \item If $\Lambda > 0$, we get from \eqref{inequalitypolynomial} that
\begin{equation*}
    0<\Lambda|\Sigma|\leq 2\pi \chi(\Sigma)-4\pi^2\mathfrak{q}(\Sigma)^2\leq 2\pi \chi(\Sigma).
\end{equation*}
Therefore, $\chi (\Sigma) = 1 $. Moreover, we have that $\Delta= 4\pi^2(1-4\Lambda\mathfrak{q}(\Sigma)^2)\geq 0$, that is, $4\Lambda \mathfrak{q}(\Sigma)^2\leq 1$, and 
$$
\dfrac{\pi}{\Lambda}\left(1-\sqrt{1-4\Lambda\mathfrak{q}(\Sigma)^2}\right)\leq \Sigma\leq \dfrac{\pi}{\Lambda}\left(1+\sqrt{1-4\Lambda\mathfrak{q}(\Sigma)^2}\right).
$$
\item If $\Lambda = 0$ in \eqref{inequalitypolynomial}, we get 
\begin{align*}
   4\pi ^2 \mathfrak{q}(\Sigma )^2 \leq 2\pi \chi (\Sigma) |\Sigma|. 
\end{align*}
Therefore, $\chi (\Sigma) \geq 0$, and if $\chi (\Sigma) = 0 $, then we conclude that the electric charge of $\Sigma$ is zero, i.e., $\mathfrak{q}(\Sigma) = 0$. In the case $\chi(\Sigma)=1$, we obtain that $|\Sigma|\geq 2\pi \mathfrak{q}(\Sigma)^2$.
\item If $\Lambda <0$,  we get from \eqref{inequalitypolynomial} that \begin{equation*}
    |\Sigma| \geq \dfrac{\pi}{|\Lambda|}\Big(-\chi (\Sigma) + \sqrt{\chi (\Sigma)^2 + 4|\Lambda| \mathfrak{q}(\Sigma)^2}\Big).
\end{equation*}
\end{itemize}

Therefore, we have proved inequalities \eqref{ineqlambdapos}, \eqref{ineqlambdazero} and \eqref{ineqlambdaneg}.
\end{proof}

In the next proposition, we analyze what happens when equality holds in some of inequalities \eqref{ineqlambdapos}, \eqref{ineqlambdazero} and \eqref{ineqlambdaneg}. This will be very useful in the proof of Theorem \eqref{main}.

\begin{proposition}[Infinitesimal rigidity]\label{infrigidity}
In Proposition \eqref{areacharge}, if equality in \eqref{ineqlambdapos}, \eqref{ineqlambdazero} and \eqref{ineqlambdaneg}, then 
\begin{itemize}
\item[(i)] $E=aN$, for some $a\in\mathbb{R}$;
\item[(ii)] The null second fundamental form $\chi^+$ of $\Sigma$ vanishes;
\item[(iii)] The dominant energy condition (DEC) saturates on $\Sigma$, that is, $\Lambda-|J|=\Lambda+|E|^2$ on $\Sigma$. Moreover, $|J|=-J(N)$ on $\Sigma$ and $J|_{T\Sigma}\equiv 0$;
\item[(iv)] The boundary energy condition (BDEC) saturates on $\partial\Sigma$, that is, $H^{\partial M}=|(i_{\bar{N}}\pi)^\top|$ on $\partial\Sigma$. Moreover, $|(i_{\bar{N}}\pi)^\top|=(i_{\bar{N}}\pi)^\top(N)$ and $(i_{\bar{N}}\pi)^\top|_{T(\partial\Sigma)}\equiv 0$;
\item[(v)] $\Sigma$ has constant Gauss curvature $K_\Sigma=\Lambda + |E|^2$ (which by (ii) and (iii) is equivalent to $Q=0$ on $\Sigma$) and $\partial \Sigma$ is a geodesic in $\Sigma$, that is, $k_g=0$ (which by item (iv), \eqref{free boundary condition} and \eqref{iota subs} is equivalent to $ \Pi^{\partial M}(N,N)=\langle X,\nu\rangle $ on $\partial\Sigma$).
\end{itemize}
\end{proposition}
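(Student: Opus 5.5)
The plan is to trace the chain of inequalities in the proof of Proposition \ref{areacharge} and observe that equality in \eqref{ineqlambdapos}, \eqref{ineqlambdazero} or \eqref{ineqlambdaneg} forces equality in every intermediate step. In each case the area bound came from the quadratic inequality \eqref{inequalitypolynomial}, equivalently \eqref{ineqpolynomial2}. If $|\Sigma|$ equals one of the endpoints, then \eqref{inequalitypolynomial} is an equality. Since \eqref{ineqpolynomial2} was obtained from \eqref{auxineq} via \eqref{cauchy-schwarz}, and \eqref{auxineq} from \eqref{estimative area} and \eqref{auxneq2}, all the discarded nonnegative terms must vanish. In particular the quantity $\int_\Sigma Q\,da-\int_{\partial\Sigma}(\Pi^{\partial M}(N,N)-\langle X,\nu\rangle)\,d\ell$ must be zero.

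From equality in \eqref{cauchy-schwarz} I will extract (i). Equality in $\int\langle E,N\rangle^2\le\int|E|^2$ gives $E=\langle E,N\rangle N$. Equality in Cauchy--Schwarz gives that $\langle E,N\rangle$ is constant, say $a$. Hence $E=aN$. One caveat needs checking: if $|\Sigma|$ is such that $\mathfrak q=0$, I must verify that the $|\Sigma|^{-1}$ step still forces this. In that case equality in \eqref{auxineq} with $\mathfrak q=0$ gives $\int|E|^2=0$, so $E=0$, which is $a=0$. Next, the pointwise inequalities used in \eqref{auxneq2} all become equalities almost everywhere, hence everywhere by continuity. This gives $\chi^+=0$, which is (ii). It also gives $\mu+J(N)=\mu-|J|=\Lambda+|E|^2$, which is (iii) after noting that $J(N)=-|J|$ forces $J$ to be a nonpositive multiple of $N^\flat$, so $J|_{T\Sigma}=0$. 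Similarly $H^{\partial M}=|(i_{\bar N}\pi)^\top|=(i_{\bar N}\pi)^\top(N)$ on $\partial\Sigma$. Since $N$ is a unit vector tangent to $\partial M$ along $\partial\Sigma$, equality in Cauchy--Schwarz forces $(i_{\bar N}\pi)^\top$ to be proportional to $N$. Thus it vanishes on $T(\partial\Sigma)$, the orthogonal complement of $N$ in $T\partial M$, giving (iv).

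The main obstacle is (v), since equality in the integrated inequality only says $\int_\Sigma Q\,da-\int_{\partial\Sigma}(\Pi^{\partial M}(N,N)-\langle X,\nu\rangle)\,d\ell=0$. It does not directly give pointwise information on $K_\Sigma$ or $k_g$. To get this I will use the Rayleigh quotient \eqref{Rayleigh}. Because $\lambda_1(L_0)\ge0$ by Lemma \ref{estimativeeigenvalue}, and $u\equiv1$ makes the numerator zero, the constant function is a minimizer. Then $\lambda_1(L_0)=0$ and $u=1$ is a first eigenfunction. The Euler--Lagrange equations give $L_0 1=Q=0$ in $\Sigma$ and $B_0 1=-\Pi^{\partial M}(N,N)+\langle X,\nu\rangle=0$ on $\partial\Sigma$. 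By (ii) and (iii), $Q=K_\Sigma-\Lambda-|E|^2$, so $K_\Sigma=\Lambda+a^2$ is constant. On the boundary, combining \eqref{free boundary condition}, \eqref{iota subs} and (iv) gives $k_g=H^{\partial M}-\Pi^{\partial M}(N,N)=(i_{\bar N}\pi)^\top(N)-\langle X,\nu\rangle=0$. This completes (v).
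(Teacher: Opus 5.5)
Your proposal is correct and is essentially the paper's own argument: equality in the area bound forces equality in \eqref{cauchy-schwarz}, \eqref{auxineq} and \eqref{auxneq2}, which gives (i)--(iv) pointwise. Then $u\equiv 1$ attains the infimum in \eqref{Rayleigh}, so $\lambda_1(L_0)=0$, $Q=0$ and $B_0(1)=0$, which yields (v). Your separate treatment of the case $\mathfrak{q}(\Sigma)=0$ is harmless but not needed: equality in Cauchy--Schwarz already makes $\langle E,N\rangle$ constant, and a constant with zero integral vanishes. You also correctly read the saturation in (iii) as $\mu-|J|=\Lambda+|E|^2$.
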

\begin{proof}
First note that equality in \eqref{ineqlambdapos}, \eqref{ineqlambdazero} and \eqref{ineqlambdaneg} is equivalent to have equality in \eqref{inequalitypolynomial}. Therefore, all inequalities used to get \eqref{inequalitypolynomial} are in fact equalities. 

From equality in \eqref{cauchy-schwarz}, we obtain that $E=aN$ on $\Sigma$, for some constant $a\in\mathbb{R}$.

From in \eqref{auxneq2} and \eqref{auxineq} we obtain that $\chi^+=0$, $\Lambda-|J|=\Lambda+|E|^2$ and $|J|=-J(N)$ on $\Sigma$. In particular, $J|_{T\Sigma}\equiv 0$. We also obtain that $H^
{\partial M}=|(i_{\bar{N}}\pi)^\top|$ and $|(i_{\bar{N}}\pi)^\top|=(i_{\bar{N}}\pi)^\top(N)$ on $\partial\Sigma$. In particular, $(i_{\bar{N}}\pi)^\top|_{T(\partial\Sigma)}\equiv 0$. 

Next, it follows from equality in \eqref{estimative area} that the constant function $u\equiv 1$ attains the infimum in \eqref{Rayleigh}. As a consequence we have that $\lambda_1(L_0)=0$ and $Q=L_0(1)=0$ in $\Sigma$ and $-\Pi^{\partial M}(N,N)+\langle X,\nu\rangle=B_0(1)=0$ on $\partial\Sigma$. Thus, from \eqref{defofQ}, we obtain that $K_\Sigma=\Lambda+|E|^2$ on $\Sigma$. Finally, since $H^{\partial M}=(i_{\bar{N}}\pi)^\top(N)=\langle X,\nu\rangle=\Pi^{\partial M}(N,N)$ on $\partial\Sigma$, we have by \eqref{free boundary condition} that $\partial \Sigma$ is a geodesic, that is $k_g=0$.
\end{proof}

\section{Proof of Theorem \ref{main} }\label{mainproofsec}

Since $\Sigma$ is a free boundary stable MOTS, we have by Proposition \ref{areacharge} that the area of $\Sigma$ satisfies the area-charge inequalities \eqref{ineqlambdapos} if $\Lambda>0$, \eqref{ineqlambdazero} if $\Lambda=0$ and \eqref{ineqlambdaneg} if $\Lambda<0$. 

Now, suppose that one of the inequalities above is actually an equality. Then, by item (v) of Proposition \ref{infrigidity}, we have  $Q=0$ on $\Sigma$ and $\Pi^{\partial M}(N,N)=\langle X,\nu\rangle$ on $\partial\Sigma$.

Therefore, since $\Sigma$ is a free boundary stable MOTS, we have by Lemma 3.5 from \cite{mendes2019rigidity} that $\lambda=0$ is a simple eigenvalue of the stability operator $L$ on $\Sigma$ with Robin condition $B\phi=0$ whose associated eigenfunction can be chosen to be positive and the same holds for the formal adjoint $L^*$ on $\Sigma$ with Robin boundary condition $B^*\phi^*=0$. 

Thus, it follows from Lemma 3.6 of \cite{mendes2022rigidity} that there exist an outer neighborhood $V\cong [0,\varepsilon)\times \Sigma$ of $\Sigma\cong\{0\}\times \Sigma$ in $M^+$ and a positive function $\varphi=\varphi(t,x)$ defined on $V$ such that:
\begin{enumerate}
\item the metric $g$ restricted to $V$ has the orthogonal decomposition $g|_V=\varphi^2dt^2+\gamma_t$, where $\gamma_t$ is the induced metric on $\Sigma_t\cong \{t\}\times \Sigma$.
\item Each $\Sigma_t$ is a free boundary surface in $(M,g,K,E)$ with constant null mean curvature $\theta^+(t)$ with respect to the outward unir normal $N_t=\varphi_t^{-1}\frac{\partial}{\partial t}$, where $\varphi_t=\varphi(t,\cdot)$ and  $N_0=N$.
\item $\partial\varphi_t/\partial\nu_t=\Pi^{\partial M}(N_t,N_t)\varphi_t$ on $\partial\Sigma_t$, where $\nu_t$ is the outward unit normal of $\partial\Sigma_t$ in $(\Sigma_t,\gamma_t)$.
\end{enumerate}

In the following, to simplify the notation, we will denote $\theta^+(t)$ just by $\theta(t)$. By  \eqref{variationoftheta} and \eqref{stabilityoperator}, we have at all $t\in[0,\varepsilon)$ that

\begin{eqnarray*}
\dfrac{\theta^\prime}{\varphi_t}-\theta\tau &=&-\dfrac{\Delta_{\Sigma_t}\varphi_t}{\varphi_t} +2\left\langle X^t,\dfrac{\nabla_{\gamma_t}\varphi_t}{\varphi_t}\right\rangle+Q-|X^t|^2+\divergente_{\gamma_t}X-\frac{\theta^2}{2}\\
&\leq & \divergente_{\gamma_t} Y^t-|Y^t|^2+Q,
\end{eqnarray*}
where $\Delta_{\Sigma_t}$ and $\nabla_{\gamma_t}$ denote, respectively, the Laplacian operator and the gradient on $(\Sigma_t,\gamma_t)$, $X^t$ is the tangent vector field on $(\Sigma_t,\gamma_t)$ dual to the $1$-form $K(\cdot,N_t)$ and $Y^t=X^t-\nabla_{\gamma_t}(\ln \varphi_t)$.

Integrating over $\Sigma_t$ and using the divergence theorem and that $\theta^\prime$ and $\theta$ are constant on $\Sigma_t$, we have 

\begin{eqnarray*}
\theta^\prime\int_{\Sigma_t}\dfrac{1}{\varphi_t}\,da_t-\theta\int_{\Sigma_t}\tau\,da_t & \leq & \int_{\Sigma_t}Q\,da_t+\int_{\partial\Sigma_t}\langle Y^t,\nu_t\rangle\,d\ell_t\\
&=&\int_{\Sigma_t}K_{\Sigma_t}\,da_t - \int_{\Sigma_t}(\mu+J(N_t))\,da_t-\dfrac{1}{2}\int_{\Sigma_t}|\chi^t|^2\,da_t\\
&&+\int_{\partial\Sigma_t}\left(\langle X^t,\nu_t\rangle-\dfrac{1}{\varphi_t}\dfrac{\partial \varphi_t}{\partial\nu_t}\right)\,d\ell_t\\
&\leq &\int_{\Sigma_t}K_{\Sigma_t}\,da_t-\int_{\Sigma_t}(\Lambda+|E|^2)\,da_t\\
&&+\int_{\partial\Sigma_t}\left((i_{\bar{N}}\pi)^\top(N_t)-\Pi^{\partial M}(N_t,N_t)\right)\,d\ell_t\\
&\leq&\int_{\Sigma_t}K_{\Sigma_t}\,da_t-\Lambda|\Sigma_t|-\dfrac{4\pi ^2\mathfrak{q}(\Sigma_t)^2}{|\Sigma_t|}+\int_{\partial\Sigma_t}k_{g}\,d\ell_t\\
&&+\int_{\partial\Sigma_t}\left((i_{\bar{N}}\pi)^\top(N_t)-H^{\partial M}\right)\,d\ell.
\end{eqnarray*}
Thus, using the boundary energy condition $H^{\partial M}\geq |(i_{\bar{N}}\pi^\top)|\geq (i_{\bar{N}}\pi)^\top(N_t)$ and the Gauss-Bonnet theorem, we have that
\begin{equation}\label{ineqtheta1}
\theta^\prime\int_{\Sigma_t}\dfrac{1}{\varphi_t}\,da_t-\theta\int_{\Sigma_t}\tau\,da_t \leq  2\pi\chi(\Sigma_t)-\Lambda|\Sigma_t|-\dfrac{4\pi^2\mathfrak{q}(\Sigma_t)}{|\Sigma_t|}.
\end{equation}
We note that $\chi(\Sigma_t)=\chi(\Sigma)$ for all $t$. Moreover, since $E$ is tangent to $\partial M$,  $\divergente E=0$ and $\Sigma_t$ is homologous to $\Sigma_0=\Sigma$, for all $t$, we obtain by applying the divergence theorem that
$$
0=\int_{\Sigma_t}\langle E,N_t\rangle\,da_t-\int_{\Sigma}\langle E,N\rangle\,da=\mathfrak{q}(\Sigma_t)-\mathfrak{q}(\Sigma),
$$
that is,
$q(\Sigma_t)=\mathfrak{q}(\Sigma)$ for all $t$. 

Therefore, since $\Sigma$ attains equality in \eqref{ineqpolynomial2}, inequality \eqref{ineqtheta1} becomes
\begin{equation}
\theta^\prime\int_{\Sigma_t}\dfrac{1}{\varphi_t}\,da_t-\theta\int_{\Sigma_t}\tau\,da_t\leq \Lambda\left(|\Sigma|- |\Sigma_t|\right) +4\pi^2\mathfrak{q}(\Sigma)^2\left(\dfrac{1}{|\Sigma|}-\dfrac{1}{|\Sigma_t|}\right)
\end{equation}
for all $t\in[0,\varepsilon)$.

First, suppose that $\Lambda\geq 0$. Since $\Sigma$ is outer area minimizing if $\Lambda>0$, we have that $|\Sigma|\leq |\Sigma_t|$ for all $t\in [0,\varepsilon)$ in this case. Thus,

\begin{equation*}
\theta^\prime\int_{\Sigma_t}\dfrac{1}{\varphi_t}\,da_t-\theta\int_{\Sigma_t}\tau\,da_t  \leq  \dfrac{4\pi^2\mathfrak{q}(\Sigma)^2}{|\Sigma||\Sigma_t|}\left(|\Sigma_t|-|\Sigma|\right)
\leq C\left(|\Sigma_t|-|\Sigma|\right),  
\end{equation*}
where $C=\frac{4\pi^2\mathfrak{q}(\Sigma)^2}{|\Sigma|^2}$. Now, using the fundamental theorem of calculus, the first variation of area and the fact that $\Sigma_t$ is a free boundary surface, we obtain that
\begin{eqnarray*}
\theta^\prime\int_{\Sigma_t}\dfrac{1}{\varphi_t}\,da_t-\theta\int_{\Sigma_t}\tau\,da_t &\leq &C\int_0^t\dfrac{d}{ds}|\Sigma_s|\,ds\\
& = & C\int_0^t\left(\int_{\Sigma_s}H(s)\varphi_s\,da_s\right)
\,ds,
\end{eqnarray*}
where $H(s)$ denotes the mean curvature of $\Sigma_t$ with respect to $N_s$. 

Thus, since $\theta(s)=\tr_{\Sigma_s}K+H(s)\geq H(s)$ because $K$ is $2$-convex, we have that
\begin{equation}\label{ineq1thetamain}
\theta^\prime(t)\int_{\Sigma_t}\dfrac{1}{\varphi_t}\,da_t-\theta(t)\int_{\Sigma_t}\tau\,da_t \leq C\int_0^t\theta(s)\left(\int_{\Sigma_s}\varphi_s\,da_s\right)\,ds,
\end{equation}
for all $t\in [0,\varepsilon)$, where we have used that $\theta(s)$ is constant on $\Sigma_s$. 

Next, note that $\theta(t)\geq 0$ for all $t\in[0,\varepsilon)$ since $\Sigma$ is weakly outermost. Thus, we can apply Lemma \ref{Lemma f nonnegative} to \eqref{ineq1thetamain} in order to conclude that $\theta(t)=0$ for all $t\in[0,\varepsilon)$, that is, $\Sigma_t$ is a MOTS with free boundary for all $t\in[0,\varepsilon)$. 

As a consequence, we have that all the above inequalities are in fact equalities. In particular, $\tr_{\Sigma_t} K=0$ in $\Sigma_t$ and $H(t)\leq 0$, for all $t\in[0,\varepsilon)$. Therefore, $|\Sigma_t|\leq |\Sigma_0|=|\Sigma|$ for all $t\in[0,\varepsilon)$. Since $\Sigma$
is outer area minimizing, we have that $|\Sigma_t|=|\Sigma|$ for all $t$. This implies that $\Sigma_t$ is a minimal MOTS. Moreover, we also obtain that $\Sigma_t$ attains equality in \eqref{ineqlambdapos} or \eqref{ineqlambdazero}, as the case may be. Therefore, for all $t\in[0,\varepsilon)$, the conclusion of the infinitesimal rigidity in Proposition \ref{infrigidity} can be applied to $\Sigma_t$. 

In particular, as $\mathfrak{q}(\Sigma_t)=\mathfrak{q}(\Sigma)$ and $|\Sigma_t|=|\Sigma|$ for all t, we have that $E=aN_t$ on $V\cong[0,\varepsilon)\times \Sigma$, for some constant $a\in\mathbb{R}$. 

Next, since $K_{\Sigma_t}=\mu-|J|$ on $\Sigma_t$ and $\Sigma_t$ is minimal for all $t$, we have, by using the Gauss equation, that
\begin{eqnarray*}
0=-\frac{H^\prime(t)}{\varphi_t}&=&\frac{\Delta_{\Sigma_t}\varphi_t}{\varphi_t}+\frac{1}{2}\left(R_M+|A_{\Sigma_t}|^2\right)-K_{\Sigma_t} \\
&=&\frac{\Delta_{\Sigma_t}\varphi_t}{\varphi_t}+\frac{1}{2}\left(R_M+|A_{\Sigma_t}|^2\right)-(\mu-|J|)\\
&=&\frac{\Delta_{\Sigma_t}\varphi_t}{\varphi_t}+\frac{|A_{\Sigma_t}|^2}{2}+\frac{|K|^2}{2}-\frac{(\tr K)^2}{2}+|J|\\
&=&\frac{\Delta_{\Sigma_t}\varphi_t}{\varphi_t}+\frac{|A_{\Sigma_t}|^2}{2}+\frac{|K|_{T\Sigma_t}|^2}{2}+|X^t|^2+|J|,
\end{eqnarray*}
where in the last equality we have used that $\tr_{\Sigma_t}K=0$.

Integrating over $\Sigma_t$ and using that
$$
\dfrac{\partial\varphi_t}{\partial\nu_t}=\Pi^{\partial M}(N_t,N_t)\varphi_t=|(i_{\bar{N}}\pi)^\top|\varphi_t\geq 0
$$
we obtain
\begin{eqnarray*}
0&\leq& \int_{\partial\Sigma_t}|(i_{\bar{N}}\pi)^\top|+ \int_{\Sigma_t}\left(\frac{|\nabla_{\gamma_t}\varphi_t|^2}{\varphi_t}+\frac{|A_{\Sigma_t}|^2}{2}+\frac{|K|_{T\Sigma_t}|^2}{2}+|X^t|^2+|J|\right)=0.
\end{eqnarray*}
Therefore, we conclude that $\Pi^{\partial M}(N_t,N_t)=|(i_{\bar{N}} \pi)^\top|=0$ on $\partial\Sigma_t$, $\varphi_t$ depends only on $t$ and $\Sigma_t$ is totally geodesic, $J|_{\Sigma_t}=0$, $K|_{T\Sigma_t}=0$ and $X^t=0$ for all $t\in[0,\varepsilon)$. In particular, $K=fdt^2$ for some smooth function $f$ defined on $V\cong [0,\varepsilon)\times \Sigma$.  Since $J|_{\Sigma_t}=0$ for all $t$. Note that $J|_{\Sigma_t}=0$ for all $t$ is equivalent to say that $d(\tr K) = \divergente K$ on $V$. Since $K|_{T\Sigma_t}=0$ and $K(\cdot,N_t)=0$, for all $t$, it follows that $\tr K$ depends only on $t$ and, as $\varphi_t$ is constant on $\Sigma_t$, the same holds for $f$.

Now, as $\Sigma_t$ is totally geodesic for all $t$, we conclude that $\gamma_t=\gamma_0$ for all $t$. Moreover, since $\varphi_t=\varphi(t)>0$ depends only on $t$, we can assume, after a change of variable, that $\varphi_t=1$ for all $t\in[0,\varepsilon)$. In particular, the metric $g$ can be written on $V\cong [0,\varepsilon)\times \Sigma$ as  $g=dt^2+\gamma_0$.

Finally, suppose that $\Lambda<0$. In this case, we have that

\begin{eqnarray*}
\theta^\prime\int_{\Sigma_t}\dfrac{1}{\varphi_t}\,da_t-\theta\int_{\Sigma_t}\tau\,da_t
&\leq & \Lambda\left(|\Sigma|-|\Sigma_t|\right)+\dfrac{4\pi^2 \mathfrak{q}(\Sigma)^2}{|\Sigma||\Sigma_t|}\left(|\Sigma_t|-|\Sigma|\right)\\
&\leq &C(t)\left(|\Sigma_t|-|\Sigma|\right),
\end{eqnarray*}
where $C(t)=-\Lambda+\dfrac{4\pi^2\mathfrak{q}(\Sigma)^2}{|\Sigma||\Sigma_t|}>0$, for all $t\in[0,\varepsilon)$.

Therefore, in a manner analogous to the case $\Lambda\geq 0$, we can obtain
$$
\theta^\prime(t) \eta(t) - \theta(t)\rho(t) \leq \int_0^t \theta(s)\xi(s)\,ds, \ \ t\in[0,\varepsilon), 
$$
where
$$
\eta(t)=\frac{1}{C(t)}\int_{\Sigma_t} \frac{1}{\varphi_t}\,da_t, \ \ \rho(t)=\frac{1}{C(t)}\int_{\Sigma_t}\tau\,da_t \ \ \mbox{and} \ \ \xi(t)=\int_{\Sigma_t}\varphi_t\,da_t.
$$

Thus, since $\theta(t)\geq 0$ for all $t\in[0,\varepsilon)$, another application of Lemma \ref{Lemma f nonnegative} yields $\theta(t)=0$ for all $t\in [0,\varepsilon)$. From this point on, we proceed in the same way as in the case $\Lambda\geq 0$.

\subsection*{Acknowledgments} I. N. was partially supported by CNPq, Grants No. 444531/2024-6, 403869/2024-2, and 400078/2025-2. I. N. and E. F. were partially supported by CAPES through the PRAPG Program, Grant No. 88887.986826/2024-2. 
Part of this work was carried out during the postdoctoral fellowship of E. F. funded by CAPES through the PRAPG Program at the Graduate Program in Mathematics (PPGMAT) of the Federal University of Maranhão (UFMA). E. F. would like to thank PPGMAT - UFMA for its hospitality.


\end{document}